\def\frk{\frak}               
\def\Phi{{\frk n}}
\def\Phi{{\frk N}}
\def\opn#1#2{\def#1{\operatorname{#2}}} 
\opn\chara{char} \opn\length{\ell} \opn\pd{pd} \opn\rk{rk}
\opn\projdim{proj\,dim} \opn\injdim{inj\,dim} \opn\rank{rank}
\opn\depth{depth} \opn\sdepth{sdepth} \opn\fdepth{fdepth}
\opn\grade{grade} \opn\height{height} \opn\embdim{emb\,dim}
\opn\codim{codim}  \opn\min{min} \opn\max{max}
\opn\Tr{Tr} \opn\bigrank{big\,rank}
\opn\superheight{superheight}\opn\lcm{lcm}
\opn\trdeg{tr\,deg}
\opn\reg{reg} \opn\lreg{lreg} \opn\ini{in} \opn\lpd{lpd}
\opn\size{size}
\opn\div{div} \opn\Div{Div} \opn\cl{cl} \opn\Cl{Cl}
\opn\Spec{Spec} \opn\Supp{Supp} \opn\supp{supp} \opn\Sing{Sing}
\opn\Ass{Ass} \opn\Min{Min}
\opn\Ann{Ann} \opn\Rad{Rad} \opn\Soc{Soc}
\opn\Im{Im} \opn\Ker{Ker} \opn\Coker{Coker} \opn\Am{Am}
\opn\Hom{Hom} \opn\Tor{Tor} \opn\Ext{Ext} \opn\End{End}
\opn\Aut{Aut} \opn\id{id}  \opn\deg{deg}
\opn\nat{nat}
\opn\pff{pf}
\opn\Pf{Pf} \opn\GL{GL} \opn\SL{SL} \opn\mod{mod} \opn\ord{ord}
\opn\Gin{Gin} \opn\Hilb{Hilb}
\opn\aff{aff} \opn\con{conv} \opn\relint{relint} \opn\st{st}
\opn\lk{lk} \opn\cn{cn} \opn\core{core} \opn\vol{vol}
\opn\link{link} \opn\star{star}
\opn\gr{gr}
\def\pot#1#2{#1[\kern-0.28ex[#2]\kern-0.28ex]}
\opn\dirlim{\underrightarrow{\lim}}
\opn\inivlim{\underleftarrow{\lim}}
\def\Implies{\ifmmode\Longrightarrow \else
        \unskip${}\Longrightarrow{}$\ignorespaces\fi}
\def\implies{\ifmmode\Rightarrow \else
        \unskip${}\Rightarrow{}$\ignorespaces\fi}
\def\iff{\ifmmode\Longleftrightarrow \else
        \unskip${}\Longleftrightarrow{}$\ignorespaces\fi}
\newtheorem{Theorem}{Theorem}[section]
\newtheorem{Lemma}[Theorem]{Lemma}
\let\epsilon\varepsilon
\let\phi=\varphi
\let\kappa=\varkappa
\def\qed{\ifhmode\textqed\fi
      \ifmmode\ifinner\quad\qedsymbol\else\dispqed\fi\fi}
\def\textqed{\unskip\nobreak\penalty50
       \hskip2em\hbox{}\nobreak\hfil\qedsymbol
       \parfillskip=0pt \finalhyphendemerits=0}
\def\dispqed{\rlap{\qquad\qedsymbol}}
\opn\dis{dis}
\def\pnt{{\raise0.5mm\hbox{\large\bf.}}}
\opn\Lex{Lex}
\begin{document}

\title{\bf Stanley Conjecture on intersection of three monomial primary ideals}

\author{Andrei Zarojanu }

\thanks{}

\address{Andrei Zarojanu , Faculty of Mathematics and Computer Sciences,
University of Bucharest, Str. Academiei 14, Bucharest, Romania}
\email{andrei.zarojanu@yahoo.com}

\maketitle
\begin{abstract}  We show that the Stanley's Conjecture holds for an intersection of

 three monomial primary ideals of a polynomial algebra S over a field.

  \vskip 0.4 true cm
 \noindent
  {\it Key words } : Monomial Ideals,  Stanley decompositions, Stanley depth.\\
 {\it 2000 Mathematics Subject Classification: Primary 13C15, Secondary 13F20, 13F55,
13P10.}
\end{abstract}

\section*{Introduction}

Let $K$ be a field and $S=K[x_1,...,x_n]$ be the polynomial ring over $K$ in $n$ variables. Let $I \subset S$ be a monomial ideal of $S, u \in I$ a monomial and $uK[Z], Z \subset \{x_1,...,x_n\}$ the linear $K$-subspace of $I$ of all elements $uf$, $f \in K[Z]$. A presentation of $I$ as a finite direct sum of spaces $\mathcal{D}:I=\bigoplus_{i=1}^{r} u_iK[Z_i]$ is called a Stanley decomposition of $I$. Set $\sdepth (\mathcal{D})= \min \{|Z_i|:i=1,...,r \}$ and
$$\sdepth\ I :=\max\{\sdepth \ ({\mathcal D}):\; {\mathcal D}\; \text{is a
Stanley decomposition of}\;  I \}.$$

 The Stanley's Conjecture \cite{S} says that sdepth $I \geq$ depth $I$. This is proved if either $I$ is an intersection of four monomial prime ideals by \cite[Theorem 2.6]{AP} and \cite[Theorem 4.2]{P1}, or $I$ is the intersection of two monomial irreducible ideals by \cite[Theorem 5.6]{PQ}, or a square free monomial ideal of $K[x_1,\ldots,x_5]$ by \cite{P} (a short exposition on this subject  is given in \cite{P2}). It is the purpose of our paper to show that the Stanley's Conjecture holds for intersections of three monomial primary ideals (see Theorem \ref{3}).
\vskip 1 cm

\section{ Computing depth}

Let $I \subset S$ be a monomial ideal and $I=\bigcap_{i=1}^{s} Q_i$ an irredundant primary decompostion of I, where the $Q_i$ are monomial primary ideals. Set $P_i=\sqrt{Q_i}$.
According to Lyubeznik \cite{L} $\size I$ is the number $v + (n - h) - 1$, where $h$ = height $\sum_{j=1}^{s} Q_j$ and $v$ is the minimum number $t$ such that there exist $1\leq j_1 < ... < j_t\leq s$ with
$$\sqrt{\sum\limits_{k=1}^{t} Q_{j_k}}=\sqrt{\sum\limits_{j=1}^{s} Q_j}.$$ In \cite{L} it shows that $\depth_SI\geq 1+\size I$.

In the study of the Stanley's Conjecture,  we may always assume that $h = n$, that is $\sum_{i=1}^s P_i=m=:(x_1,\ldots,x_n)$, because each free variable on $I$ increases depth and sdepth with $1$.
\begin{Lemma}\label{depth}
 Let $I \subset S$ be a monomial ideal and $I = \bigcap\limits_{i=1}^{3} Q_i$ an irredundant primary decomposition of I, where each $Q_i$ is $P_i$ - primary. Suppose that $P_i\not =m$ for all $i\in [3]$. Then

\begin{enumerate}
  \item [(a)]  If $Q_1 \subset Q_2 + Q_3$ and $P_1\not \subset P_i$ for $i=2,3$,  then \\ $\depth_S S/I = 1 + \min\{\dim S/(P_1+P_2), \dim S/(P_1+P_3)\}.$
  \item [(b)] If $Q_1 \subset Q_2 + Q_3$  and  $P_1\subset P_2$,$P_1\not \subset P_3$, then \\ $\depth_S S/I =  \min\{\dim S/P_2, 1+\dim S/(P_1+P_3)\}.$
  \item [(c)] If $Q_1 \subset Q_2 + Q_3$ and $P_1\subset P_i$ for $i=2,3$ then \\ $\depth_S S/I =  \min\{\dim S/P_2, \dim S/P_3\}.$
  \item [(d)]  If $Q_i \not\subset \sum\limits_{j=1, \ j \neq i}^{3} \ Q_j,$ for all $i$ then  $\depth_S S/I = 1$ if and only if $\size I = 1$.
  \item [(e)]  If $Q_i \not\subset \sum\limits_{j=1, \ j \neq i}^{3} \ Q_j,$ for all $i$ then  $\depth_S S/I = 2$ if and only if $\size I = 2$.
 \end{enumerate}
 \end{Lemma}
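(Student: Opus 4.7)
The main tool is the standard short exact sequence
$$0 \to S/I \to S/Q_1 \oplus S/(Q_2 \cap Q_3) \to S/\bigl(Q_1 + (Q_2 \cap Q_3)\bigr) \to 0,$$
combined with the depth lemma and the known depth formula for $S/(J_1 \cap J_2)$ when $J_1, J_2$ are monomial primary ideals (as in \cite{PQ}). I use throughout that each $S/Q_i$ is Cohen--Macaulay of depth $\dim S/P_i$, so only the middle and right-hand terms require work.

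For parts (a)--(c), distributivity of sums over intersections for monomial ideals (which holds because a monomial lies in $A + B$ iff it is a monomial multiple of a generator of $A$ or of $B$) gives the identity
$$Q_1 + (Q_2 \cap Q_3) = (Q_1 + Q_2) \cap (Q_1 + Q_3),$$
so the right-hand module of the sequence is again $S$ modulo an intersection of two monomial primary ideals, and its depth is read off from the two-primary formula. The role of $Q_1 \subset Q_2 + Q_3$ is to prevent the rightmost quotient from collapsing trivially, while the further containments $P_1 \subset P_j$ in (b)--(c) collapse some of the primary components. Splitting into the three subcases and feeding the three depths into the depth lemma then pinches $\depth S/I$ to the claimed value: in (a) the right-hand term is smallest and yields the $1 + \min$ expression; in (b) the middle term $S/(Q_2 \cap Q_3)$ contributes $\dim S/P_2$ via $P_1 \subset P_2$; in (c) both outer terms force $\min\{\dim S/P_2, \dim S/P_3\}$.

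For parts (d) and (e), the hypothesis $Q_i \not\subset \sum_{j \neq i} Q_j$ prevents the depth-lemma pinch in the sequence above, so I would pivot to Lyubeznik's bound $\depth_S S/I \geq \size I$ recalled in the introduction. Since $P_i \neq \mathfrak m$ and $\sum P_i = \mathfrak m$, $\size I$ equals $1$ precisely when some pair $P_i + P_j = \mathfrak m$ and equals $2$ otherwise, and Lyubeznik immediately gives the ``only if'' directions of both (d) and (e) by contraposition. For the ``if'' directions I would produce an associated prime of $S/I$ of the appropriate height by finding a monomial $u$ with $(I:u) = \mathfrak m$ when $\size I = 1$ (forcing $\depth S/I \leq 1$), and with $(I:u)$ a prime of height $n-2$ when $\size I = 2$ (forcing $\depth S/I \leq 2$); the irredundancy of the decomposition together with the non-containment hypothesis ensures such $u$ exist as suitable products of pure powers drawn from the generators of the $Q_i$.

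The main obstacle will be the upper-bound step in (d)--(e): the explicit colon computation placing the expected embedded prime in $\Ass(S/I)$ must use the non-containment hypothesis essentially, since without it no such embedded prime need exist. Parts (a)--(c) are mostly bookkeeping once the distributivity identity and the two-primary depth formula are invoked.
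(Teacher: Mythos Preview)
Your approach has a genuine gap in both halves.

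For (a)--(c), the Mayer--Vietoris sequence you choose does \emph{not} use the hypothesis $Q_1\subset Q_2+Q_3$, and the Depth Lemma applied to it does not pinch $\depth S/I$. Concretely, take $Q_i=P_i$ with $P_1=(x_1,x_2)$, $P_2=(x_1,x_3,x_4,x_5)$, $P_3=(x_2,x_6,x_7,x_8)$ in $S=K[x_1,\dots,x_n]$ with $n\ge 8$. This is case (a) with $Q_1\subset Q_2+Q_3$; formula (a) gives $\depth S/I=1+(n-5)=n-4$. In your sequence the middle term has depth $\min\{n-2,\,1+\dim S/(P_2+P_3)\}=n-7$ and the right-hand term $S/\bigl((Q_1+Q_2)\cap(Q_1+Q_3)\bigr)$ has depth $1+\dim S/(P_1+P_2+P_3)=n-7$ as well, so the Depth Lemma tells you nothing beyond $\depth S/I\ge n-7$. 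The paper instead uses the sequence
\[
0\to S/I\to S/(Q_1\cap Q_2)\oplus S/(Q_1\cap Q_3)\to S/Q_1\to 0,
\]
which is where $Q_1\subset Q_2+Q_3$ enters: it forces $(Q_1\cap Q_2)+(Q_1\cap Q_3)=Q_1\cap(Q_2+Q_3)=Q_1$, so the cokernel is the Cohen--Macaulay module $S/Q_1$, and now the Depth Lemma together with $P_1\in\Ass(S/I)$ does pinch the depth to $\min\{\depth S/(Q_1\cap Q_2),\depth S/(Q_1\cap Q_3)\}$.

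For (d)--(e), your plan to bound $\depth S/I$ from above by exhibiting a monomial $u$ with $(I:u)=\mathfrak m$ (resp.\ a prime of height $n-2$) cannot work: the decomposition is irredundant, so $\Ass(S/I)=\{P_1,P_2,P_3\}$ and every colon $(I:u)$ that is prime must be one of the $P_i$, never $\mathfrak m$ and not a new height-$(n-2)$ prime. The paper obtains the upper bound in (d) by applying your very sequence (with the roles of $Q_1$ and $Q_3$ swapped) and showing that its right-hand term $S/\bigl((Q_1+Q_3)\cap(Q_2+Q_3)\bigr)$ has depth $0$ precisely because $\sqrt{Q_2+Q_3}=\mathfrak m$ while $Q_1\not\subset Q_2+Q_3$ keeps the intersection irredundant; for (e) it quotes an external upper bound (Ishaq) rather than a colon computation.
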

\begin{proof}
As  $\Ass_S S/I = \{ P_1, P_2, P_3\}$ we get  $\depth_S S/I>0$ by assumptions.
We have the following  exact sequences
  \begin{enumerate}
   \item{} $$ 0 \rightarrow \frac{S}{I} \rightarrow \frac{S}{Q_1 \cap Q_2} \oplus \frac{S}{Q_1 \cap Q_3} \rightarrow \frac{S}{Q_1} \rightarrow 0,$$
 \item{} $$ 0 \rightarrow \frac{S}{Q_1 \cap Q_2} \rightarrow \frac{S}{Q_1} \oplus \frac{S}{Q_2} \rightarrow \frac{S}{Q_1+Q_2} \rightarrow 0,$$
 \item{} $$  0 \rightarrow \frac{S}{Q_1 \cap Q_3} \rightarrow \frac{S}{Q_1} \oplus \frac{S}{Q_3} \rightarrow \frac{S}{Q_1+Q_3} \rightarrow 0.$$
 \end{enumerate}
Apply Depth Lemma in (2) and (3).
If $P_1$ is not properly contained in $P_2$ or $P_3$ then $\depth \frac{S}{Q_1 \cap Q_3} = 1 + \depth \frac{S}{Q_1+Q_3}$ and  $\depth \frac{S}{Q_1 \cap Q_2} = 1 + \depth_S \frac{S}{Q_1+Q_2}$. If $P_1 \subset P_2$ then $\depth_S \frac{S}{Q_1 \cap Q_2} \geq  \depth_S \frac{S}{Q_2}
 = \dim \frac{S}{P_2}$. But $\depth_S \frac{S}{Q_1 \cap Q_2} \leq  \dim \frac{S}{Q_2}$, that is $\depth_S \frac{S}{Q_1 \cap Q_2} = \dim \frac{S}{P_2}$. Similarly, $\depth_S \frac{S}{Q_1 \cap Q_3} =  \dim \frac{S}{P_3}$ if $P_1\subset P_3$.

The statements (a),(b), (c) follow if we show that
 $$\depth_S S/I = \min \{  \depth_S  \frac{S}{Q_1 \cap Q_2},  \depth_S \frac{S}{Q_1 \cap Q_3} \}.$$
 If
 $\depth_S \frac{S}{Q_1} > \min \{  \depth_S \frac{S}{Q_1 \cap Q_2},  \depth_S \frac{S}{Q_1 \cap Q_3} \}$ then by Depth Lemma applied in (1) we get
 the above equality.
 If $\depth_S \frac{S}{Q_1} = \min \{  \depth_S  \frac{S}{Q_1 \cap Q_2},  \depth_S \frac{S}{Q_1 \cap Q_3} \}$ then we get similarly $\depth_S S/I\geq \depth_S S/Q_1 = \depth_S S/P_1$. As $P_1 \in  \Ass S/I$ then $\depth_S S/I \leq \dim S/P_1 =  \depth_S S/Q_1 .$ Thus $\depth_S S/I = \depth_S \frac{S}{Q_1}$, which is enough.

 (d) If $\depth_S S/I = 1$ then $2 = \depth_S I \geq 1 + \size I$, that is $1 \geq \size I  \geq 0$. But $\size I \not = 0$ because the primary decomposition is irredundant. Conversely, if $\size I = 1$ then $v = 2$ and we may assume that $ P_2 + P_3 = P_1 + P_2 + P_3=m$.
We consider the exact sequences
\begin{enumerate}
\item[(4)]  $$ 0 \rightarrow \frac{S}{I} \rightarrow \frac{S}{Q_1 \cap Q_2} \oplus \frac{S}{Q_3} \rightarrow \frac{S}{Q_3+(Q_1 \cap Q_2)} \rightarrow 0,$$
 \item[(5)] $$ 0 \rightarrow \frac{S}{Q_1 \cap Q_2} \rightarrow \frac{S}{Q_1} \oplus \frac{S}{Q_2} \rightarrow \frac{S}{Q_1+Q_2} \rightarrow 0.$$
\end{enumerate}
 From  (5) we have $\depth_S \frac{S}{Q_1 \cap Q_2} = 1 + \depth_S \frac{S}{Q_1+Q_2} \geq 1$ by Depth Lemma. Note that $\depth_S S/Q_3 \geq 1$ and $\depth_S\frac{S}{Q_3+(Q_1 \cap Q_2)} = \depth_S \frac{S}{(Q_1+Q_3) \cap (Q_2+Q_3)}=0$ because $\sqrt{Q_2+Q_3}=m$, and $Q_1 \not\subset
  Q_2+Q_3$. Thus  Depth Lemma applied in (4) gives $\depth_S S/I = 1$.

 (e) If $\depth_S S/I = 2$, then $\depth_S I = 3 \geq 1 + \size I$. But $\size I \leq 1$ was the subject of (d), so $\size I = 2$. Conversely,  suppose that $\size I = 2$, that is $v = 3$. Then $P_i \not\subset \sum\limits_{j=1, \ j \neq i}^{3} \ P_j,$ for all $i$ and by \cite[Proposition 2.1]{I} we get $\depth_S I\leq 3$. As
 $\depth_S I  \geq 1 + \size I$ we get $\depth_S S/I=2$.
\end{proof}
\vskip 1 cm
\section{ Stanley's depth}

In this section we introduce a new way of splitting, inspired from [4], that helps us to prove the Stanley Conjecture when  $I = \bigcap\limits_{i=1}^{3} Q_i$ is an irredundant primary decomposition of I.

\begin{Theorem} \label{2} Let $I$ be a monomial ideal and $I=Q_1 \cap Q_2$ an irredundant primary decomposition of $I$ , where $Q_i$ is $P_i$ primary. Then the Stanley conjecture holds for $I$.
\end{Theorem}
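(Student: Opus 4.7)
The plan is to reduce the statement to known cases by standard manipulations and then to construct an explicit Stanley decomposition matching the depth bound computed along the way.

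\smallskip

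\emph{Reductions.} If one of the primes equals $\mm$, say $P_2=\mm$, then $\mm\in\Ass_S(S/I)$, so $\depth_S S/I=0$ and $\depth_S I=1$; since any nonzero monomial ideal has $\sdepth\geq 1$, the conjecture is automatic in that case. So assume $P_1,P_2\subsetneq\mm$. Variables lying outside $P_1\cup P_2$ are free on $I$ and contribute $+1$ to both $\depth$ and $\sdepth$, hence I may further assume $P_1+P_2=\mm$; together with $P_i\neq\mm$ this forces $P_1$ and $P_2$ to be incomparable.

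\smallskip

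\emph{Depth computation.} Applying the Depth Lemma to the short exact sequence
$$0\to S/I\to S/Q_1\oplus S/Q_2\to S/(Q_1+Q_2)\to 0,$$
together with the facts that each $S/Q_i$ is Cohen--Macaulay (so $\depth S/Q_i=\dim S/P_i\geq 1$) and $\depth S/(Q_1+Q_2)=0$ (since $\sqrt{Q_1+Q_2}=\mm$), yields $\depth_S S/I=1$, hence $\depth_S I=2$. The task reduces to proving $\sdepth_S I\geq 2$.

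\smallskip

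\emph{Stanley decomposition.} Pick $x_p\in P_1\setminus P_2$ (nonempty by incomparability) and split
$$I=(I\cap K[\widehat{x_p}])\ \oplus\ x_p\cdot(I:x_p)$$
as $K$-vector spaces. Because $x_p\notin P_2$ we have $Q_2:x_p=Q_2$, so $I:x_p=(Q_1:x_p)\cap Q_2$ is still an intersection of two monomial primary ideals in $S$; and the contraction $I\cap K[\widehat{x_p}]=(Q_1\cap K[\widehat{x_p}])\cap Q_2$ is an intersection of at most two monomial primaries in $K[\widehat{x_p}]$, a polynomial ring in one fewer variable. Induction on $n$, with base cases handled by the primary case of Stanley's conjecture (known) and the reductions of the previous paragraph, produces Stanley decompositions of each summand with $\sdepth\geq 2$, and these glue via the variable split to a Stanley decomposition of $I$ in $S$ with the same bound.

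\smallskip

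\emph{Main obstacle.} The delicate point is the book-keeping through the induction when either summand degenerates. If $P_1=\{x_p\}$ then $Q_1\cap K[\widehat{x_p}]=0$ and one must handle $I=x_p(I:x_p)$ directly, eventually reducing to a single primary component. In the contraction, $Q_1\cap K[\widehat{x_p}]$ may become the unit ideal (again a single-primary situation), or the new radicals may fail to span the maximal ideal of $K[\widehat{x_p}]$, at which point the free-variable reduction must be re-applied before invoking the inductive hypothesis. Verifying that every branch of the recursion still achieves $\sdepth\geq 2$ is the technical heart of the argument.
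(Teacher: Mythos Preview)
Your reductions and depth computation coincide with the paper's, but after establishing $\depth_S I=2=1+\size I$ the paper simply invokes \cite[Theorem 3.1]{HPV} (the inequality $\sdepth_S I\ge 1+\size I$) and is done in one line. Your attempt to replace this citation by an explicit inductive construction has two gaps.

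First, the stated induction on $n$ does not apply to the second summand: $(I:x_p)=(Q_1:x_p)\cap Q_2$ lives in $S$, still with $n$ variables, so you cannot invoke the inductive hypothesis there. This is fixable by iterating the split in the $x_p$-direction until $(Q_1:x_p^{\,k})=S$, but you have not said so except in the degenerate case $P_1=(x_p)$.

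Second, and more seriously, even for the first summand your inductive hypothesis is too weak. You only know ``Stanley's conjecture holds'' for $I\cap K[\widehat{x_p}]$, i.e.\ $\sdepth\ge\depth$ in $K[\widehat{x_p}]$; but the depth of the contraction can drop to $1$. Concretely, take $n=4$, $Q_1=(x_1,x_2)$, $Q_2=(x_2^{2},x_3,x_4)$, $x_p=x_1$. Then $I\cap K[x_2,x_3,x_4]=(x_2)\cap(x_2^{2},x_3,x_4)=(x_2^{2},x_2x_3,x_2x_4)$ is an irredundant intersection of two primaries whose second radical $(x_2,x_3,x_4)$ is the maximal ideal of $K[x_2,x_3,x_4]$; hence its depth is $1$, and the inductive bound yields only $\sdepth\ge 1$, not the $\ge 2$ you need for the glued decomposition of $I$. (The case you flagged---radicals failing to span---is not the dangerous one; the dangerous one is a contracted radical \emph{equalling} the new maximal ideal, which happens precisely when $P_2=\mm\setminus\{x_p\}$.) One can sometimes dodge this by choosing $x_p$ from the larger of $P_1\setminus P_2$ and $P_2\setminus P_1$, but when both differences are singletons (e.g.\ $P_1=\mm\setminus\{x_q\}$, $P_2=\mm\setminus\{x_p\}$) no choice of splitting variable avoids the drop, and your scheme as written does not close.
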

\begin{proof}
As usual we my suppose that $P_1 + P_2 = m$. Also we may suppose that $P_i\not =m$ for all $i$, because otherwise $\depth_S I=1$ and there exists nothing to show. Applying Depth Lemma in the above exact sequence (2) we get $\depth_S S/I = 1$, so $\depth_S I = 2 = 1 + \size I$. By \cite[Theorem 3.1]{HPV} we have $\sdepth_SI \geq \depth_S I$.
\end{proof}
\begin{Theorem} \label{3} Let $I$ be a monomial ideal and  $I = \bigcap\limits_{i=1}^{3} Q_i$ an irredundant primary decomposition of $I$ , where $Q_i$ is $P_i$ primary. Then the Stanley conjecture holds for $I$.
\end{Theorem}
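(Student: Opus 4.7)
\emph{Plan.} The argument splits along the case distinction of Lemma~\ref{depth}; assume throughout that $P_1+P_2+P_3=\mm$ and $P_i\ne\mm$ for each $i$.

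\emph{Cases (a), (b), (c).} After relabelling, $Q_1\subset Q_2+Q_3$, so
$$I=(Q_1\cap Q_2)\cap(Q_1\cap Q_3).$$
Writing $J_a:=Q_1\cap Q_a$ for $a=2,3$, Theorem~\ref{2} supplies Stanley decompositions $\mathcal D_a$ of $J_a$ with $\sdepth\mathcal D_a\ge\depth_S J_a$. Combined with Lemma~\ref{depth}, which expresses $\depth_S I$ through $\depth_S J_2$ and $\depth_S J_3$, the task is to merge $\mathcal D_2$ and $\mathcal D_3$ into a Stanley decomposition of $I=J_2\cap J_3$ whose sdepth meets the value prescribed by Lemma~\ref{depth}. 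The merging is precisely the role of the splitting technique announced at the start of Section~2: I would pick a pivot variable $x_t$ (e.g.\ one belonging to $P_1$), stratify the monomials of each $J_a$ by the exponent of $x_t$, match strata, and inside each matched stratum reduce to a problem in one fewer variable, where either Theorem~\ref{2} or an induction on $n$ applies. Iterating this over the generators of $P_1$ produces the required decomposition.

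\emph{Cases (d) and (e).} Here no $Q_i$ lies in $\sum_{j\ne i}Q_j$, and $\depth_S I=1+\size I$, so the goal is $\sdepth_S I\ge 1+\size I$. For $\size I=1$ one needs $\sdepth_S I\ge 2$; since $v=2$, two of the $P_i$ already sum to $\mm$, and a direct Stanley decomposition of $I$ exhibits two free variables in each box, in the spirit of \cite[Theorem~3.1]{HPV}. For $\size I=2$ one needs $\sdepth_S I\ge 3$; now $v=3$ forces each $P_i$ to miss a variable contributed by the other two primes, and combining the combinatorial input of \cite[Proposition~2.1]{I} with the two-primary construction of Theorem~\ref{2} yields Stanley boxes of size at least three for every generator.

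\emph{Main obstacle.} The technical heart lies in cases (a)--(c). Two boxes intersect as $uK[Z]\cap vK[W]=\lcm(u,v)K[Z\cap W]$ (or zero), and $|Z\cap W|$ may be far smaller than $\min(|Z|,|W|)$, so the decompositions of $J_2$ and $J_3$ cannot be intersected blindly. The pivot-stratification must be arranged so that every nonempty intersection piece retains at least $\depth_S I$ free variables. The extremal subcase is (c), where $\depth_S I=1+\min\{\dim S/P_2,\dim S/P_3\}$ admits essentially no slack; this is where the new splitting technique of Section~2 will do its heaviest lifting, and where the bookkeeping of which variables remain ``free'' inside each stratum is most delicate.
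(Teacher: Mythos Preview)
Your treatment of cases (d) and (e) is morally correct but overcomplicated: once Lemma~\ref{depth} gives $\depth_S I=1+\size I$, the paper simply quotes \cite[Theorem~3.1]{HPV} and is done; no hand-built decomposition is needed in case (e).

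The genuine gap is in cases (a)--(c). You propose to take Stanley decompositions $\mathcal D_2$ of $J_2=Q_1\cap Q_2$ and $\mathcal D_3$ of $J_3=Q_1\cap Q_3$ and ``merge'' them into one for $I=J_2\cap J_3$. You yourself identify the obstruction: intersecting a box from $\mathcal D_2$ with one from $\mathcal D_3$ gives $\lcm(u,v)K[Z\cap W]$, and nothing prevents $|Z\cap W|$ from dropping below $\depth_S I$. Your remedy is a ``pivot-stratification'' by the exponent of some $x_t\in P_1$, iterated over the generators of $P_1$, but you do not say what the strata are, why they cover $I$ disjointly, why each stratum is an ideal (or a shifted ideal) in fewer variables, or why the depth of each stratum is at least $\depth_S I$. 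Without those verifications there is no proof; the entire content of the theorem lives in precisely this step. The paper does \emph{not} intersect decompositions of $J_2$ and $J_3$. Instead it first disposes of the subcase $P_1+P_2=\mm$ or $P_1+P_3=\mm$ (again minimal depth, again \cite[Theorem~3.1]{HPV}), and in the remaining subcase writes $P_1=(x_1,\dots,x_r)$, $P_3=(x_{e+1},\dots,x_t)$ with $e<r$, and produces a \emph{direct sum} splitting of $I$ itself:
\[
I \;=\; \bigl(Q_1\cap Q_2\cap (Q_3\cap S')S\bigr)\ \oplus\ \bigoplus_{w} w\bigl((I:w)\cap \bar S\bigr),
\]
where $S'=K[x_1,\dots,x_r]$, $\bar S=K[x_1,\dots,x_e,x_{r+1},\dots,x_n]$, and $w$ ranges over the monomials of $K[x_{e+1},\dots,x_r]\setminus Q_3$. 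The first summand is again an intersection of three primaries but sits inside $P_1+P_2\ne\mm$, hence lives in fewer variables; each $(I:w)\cap\bar S$ is an intersection of at most three primaries in $\bar S$, also with fewer variables. Induction on $n$ applies to every piece, and Lemma~\ref{depth} is used only to check that the depth of each piece is at least $\depth_S I$. This is the ``new way of splitting'' announced at the start of Section~2; your plan replaces it with an unexecuted intersection argument.
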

\begin{proof} We may suppose as above  $P_1+P_2+P_3=m$ and $P_i\not =m$ for all $i$.
If $Q_i \not\subset \sum\limits_{j=1, \ j \neq i}^{3} \ Q_j$, for all $i \in [3]$ we have according to Lemma \ref{depth} minimal depth that is $\depth I = 1 + \size I$. Then by \cite[Theorem 3.1]{HPV} we get $\sdepth_S I\geq \depth_S I$.
Now suppose that  $Q_1 \subset Q_2 + Q_3$. It follows that  $\size I$ = 1. If $P_1 + P_2 = m$ or $P_1 + P_3 = m$ then $\dim \frac{S}{Q_1+Q_2} = 0$ or  $\dim\frac{S}{Q_1+Q_3} = 0$ therefore $\depth_SS/I = 1$ that is $\depth_S I = 2$. Then again we get $\sdepth_S I \geq 1 + \size I = 2 = \depth_S I$ by by \cite[Theorem 3.1]{HPV}.

Otherwise $P_1 + P_2 \neq m\not = P_1 + P_3$. Let $P_1 = (x_1,...,x_r)$ and $P_3 = (x_{e+1},...,x_t)$, $2 \leq r \leq n - 1,e+1 \leq r$. If $r = 1$ then $Q_1 \subset Q_2$ or $Q_1 \subset Q_3$ because $Q_1 \subset Q_2 + Q_3$. This is  false since the primary decomposition is irredundant. If $r = n$ then $P_1 = m $, which is not possible. If $e+1>r$ then $Q_1 \subset Q_2$, also a contradiction.
 We will prove this case by induction on $n$. If $n = 3$, then $\sdepth_S I \geq  1 + \size I = 2 \geq \depth_S I$, because $I$ is not principal. Assume now $n>3$. We set $S'=K[x_1,...,x_r]$, ${\bar S}:=K[x_1,...,x_e,x_{r+1},...,x_n]$
 	and  $J_3 = \bigoplus\limits_{w} w ((I:w) \cap {\bar S})$, where $w$ runs in the finite set of monomials of $K[x_{e+1},...,x_r] \setminus Q_3$.

We claim that $I=Q_1 \cap Q_2 \cap (Q_3 \cap S')S \oplus J_3$.
It is enough to see the inclusion
$"\subset"$.  Let $a \in I$ be a monomial, then $a=uv$, where $u \in {\bar S}$ and $v \in K[x_{e+1},...,x_r]$ are monomials.
If $v \not\in Q_3$  then $u \in (I: v) \cap {\bar S}$, so $a \in J_3$.
If $v \in Q_3$ then   $a \in (Q_3 \cap S')S$. As $a \in I$ we get  $a \in Q_1\cap Q_2$ therefore a $\in Q_1 \cap Q_2 \cap (Q_3 \cap S')S$. The above sum is direct. Indeed,
let $a=uv \in Q_1 \cap Q_2 \cap (Q_3 \cap S')S \cap J_3$  be as above.  Then $v\not \in Q_3$ because $a \in J_3$. But $v$ must be in $ (Q_3 \cap S')S$. Contradiction!

The ideal $I':=Q_1 \cap Q_2 \cap (Q_3 \cap S')S \subset P_1+P_2\not =m$ and so is an extension of an ideal from less than $n$-variables and we may apply the induction hypothesis for $I'$, that is $\sdepth_SI'\geq \depth_SI'$. Since $\sdepth_SI\geq \min\{\sdepth_SI',\{\sdepth_{\bar S} ((I:w) \cap {\bar S})\}_w\}$ it remains to show that $\depth_SI'\geq \depth_SI$ and $\depth_{\bar S} ((I:w) \cap {\bar S})\geq \depth_SI$, applying again the induction hypothesis since $\bar S$ has less than $n$-variables. The first inequality follows because  $\dim S/(P_3\cap S')S\geq \dim S/P_3$, $\dim S/(P_1+(P_3\cap S')S)\geq \dim S/P_1+P_3$ using  Lemma \ref{depth} (a), (b), (c).

For the second inequality note that for $w\not \in Q_1\cup Q_2\cup Q_3$ we have $(Q_i:w)$ primary and so $L_i:=(Q_i:w)\cap {\bar S}$ is ${\bar P}_i:=P_i\cap {\bar S}$-primary too. We have $\dim {\bar S}/{\bar P_i}= \dim S/P_i$ for $i=1,3$ because $(x_{e+1},\ldots,x_r)\subset P_1\cap P_3$. Thus $\dim {\bar S}/({\bar P_1}+{\bar P_i})=\dim S/(P_1+P_i)$ for all $i=2,3$. Using Lemma \ref{depth} we are done because $\dim S/P_2$ appears in the formulas only when $P_1\subset P_2$, that is when  $\dim {\bar S}/{\bar P_2}= \dim S/P_2$.

If $w\in Q_2\setminus(Q_1\cup Q_3)$ then
$$\depth_{\bar S}{\bar S}/(L_1\cap L_3)=1+\dim {\bar S}/({\bar P}_1+ {\bar P}_3)=1+\dim  S/( P_1+  P_3)\geq \depth_S S/I $$
by the same lemma, the only problem could appear when $P_1\subset P_3$, but in this case
 $$\dim  {\bar S}/({\bar P}_1+ {\bar P}_3)=\dim S/(P_1+P_3)={\bar S}/{\bar P}_3=\dim S/P_3$$
 and it follows
$$\depth_{\bar S}{\bar S}/(L_1\cap L_3)=1+\dim {\bar S}/({\bar P}_1+ {\bar P}_3)>\dim S/P_3\geq \depth_S S/I.$$
 If $w\in (Q_1\cap Q_2)\setminus Q_3$ then $\depth_{\bar S} {\bar S}/L_3=\dim S/P_3\geq \depth_S S/I$  by \cite{BH}.
\end{proof}

\vskip 0.5 cm

\end{document}